\documentclass[review]{elsarticle}

\usepackage{lineno,hyperref}
\modulolinenumbers[5]
\usepackage{amssymb}
\usepackage{amsmath}
\usepackage{amsfonts}
\usepackage{latexsym}

\usepackage{graphics}
\usepackage{float}
\usepackage{epstopdf}

\newtheorem{theorem}{Theorem}

\newtheorem{corollary}[theorem]{Corollary}

\newenvironment{proof}[1][Proof]{\noindent\textbf{#1.} }{\ \rule{0.5em}{0.5em}}

\journal{Journal of Mathematical Analysis and Applications}

\begin{document}

\begin{frontmatter}

\title{Novel series involving families of zeta functions and special functions and polynomials by applying N\"{o}rlund sum and derivative formula}

\author[firstauthor]{Elif Sükrüoglu \corref{mycorrespondingauthor}}
\address[firstauthor]{Department of Mathematics, Faculty of Science Akdeniz University of
	Akdeniz TR-07058 Antalya, Turkey}
\cortext[mycorrespondingauthor]{Corresponding author}
\ead{elifsukruoglu@akdeniz.edu.tr}

\author[firstauthor]{Yilmaz Simsek}

\begin{abstract}
In recent years, the N\"{o}rlund sum and its applications have attracted considerable attention, especially in mathematics and other applied sciences. Because this sum plays an important role in mathematical modeling involving differential equations and difference equations, as well as in the theory of infinite series and integral transforms. Therefore, one of the most important goals of this article is to present a new approach to generating function  and to derive new formulas including the sum of (inverse) Laplace transforms, the N\"{o}rlund sum, the alternative Hurwitz zeta function, and the inverse Catalan sum formula with related operators. Another aim is to establish the relationship between the inverse Catalan sum formula and the derivative operator, and to give new formulas and relations, including the  zeta function, the Bernoulli numbers, and the Laplace transform. Furthermore, applications are given including the relationship between these operators and the Dirichlet $L$-function. Some new formulas for certain classes of infinite series, including the Dirichlet $L$-function, the zeta function,  Euler numbers of the second kind, and the Apostol-Bernoulli polynomials of higher-order.
\end{abstract}

\begin{keyword}
 N\"{o}rlund sum, Laplace transform, Dirichlet $L$-function, finite sums, special functions and polynomials, Hurwitz zeta functions, derivative operator
\MSC[2020] 05A15\sep  11B68\sep 11M35\sep 35J05\sep 39A70\sep 47E07\sep 47B37 \sep 60E05.

\end{keyword}

\end{frontmatter}

\linenumbers
\section{Introduction}
\label{Sec:1}
Special polynomials and numbers involving the Bernoulli polynomials, the
Euler polynomials, and Apostol polynomials, and others have long played
significant roles in various areas. These polynomials are not only of
theoretical interest, but also have useful applications in\textbf{\ }many
different areas such as generating functions, number theory, combinatorics,
probability, statistics, and even quantum physics. Their applications can
also appear in the development of novel generating functions, sum formulas,
finite and infinite sums, recurrence relations, and computational methods.
These also reveals their algebraic and analytical properties. On the other
hand, these polynomials have many applications in the theory of the N\"{o}%
rlund sum, which is a generalization of the usual summation method in
discrete analysis. The N\"{o}rlund sum provides a very useful bridge between
the family of special numbers and polynomials, the sum of (inverse) Laplace
transforms, and infinite series. Moreover, developed as a method for
sequence transformation and summability, the N\"{o}rlund sum has gained an
important place in difference equations, approximation theory, and the
construction generating functins for special functions and polynomials. In
particular, the use of the N\"{o}rlund sum allows to obtain identities of
polynomials that cannot be easily derived by usuall methods. Furthemore, the
N\"{o}rlund sum can be used in the theory of divergent or convergent series.

The motivation of this paper is to give many new formulas and relations
involving finite and infinite sums, the Dirichlet $L$-function attached to
Dirichlet character with positive integer conductor, the Hurwitz zeta
function, the alternating Hurwitz zeta function, Bernoulli polynomials, and
Apostol-Bernoulli polynomials of higher order, etc. with the aid of the N%
\"{o}rlund sum with its properties and the homogenous N\"{o}rlund sum with
its Representation Theorem involving the (inverse) Laplace transform.

The definitions and formulas needed to achieve these motivation are briefly
given as follows:

The N\"{o}rlund sum operator is defined by means of the following infinite
series representation:%
\begin{equation*}
	Q(x;w)=-w\sum_{v=0}^{\infty }\varphi (x+vw),
\end{equation*}%
which satisfies the \ following N\"{o}rlund principal solution:%
\begin{equation}
	\overset{x}{\underset{j}{\boldsymbol{S}}}\varphi (z)\underset{w}{\Delta }%
	z=\int\limits_{j}^{\infty }\varphi (t)dt-w\sum_{v=0}^{\infty }\varphi (x+vw),
	\label{3.10}
\end{equation}%
where $\overset{x}{\underset{j}{\boldsymbol{S}}}\varphi (z)\underset{w}{%
	\Delta }z$\ is summing $\varphi (z)$\ from $j$\ to $x$, $w$ is a span of the
sum.

The fundamental problem we now address concerns finding the solution of the equation
\begin{equation}
	\underset{w}{\Delta }\left\{ Q(x;w)\right\} =\frac{E^{w}-I}{w}Q(x;w)
	\label{3.1e}
\end{equation}%
for given primitives $ Q(x;w)$ corresponding to a known function $\varphi (z)$, where $E^{w}$
denotes the shift operator.
\cite{Legermaqn,MilneThomson,SimsekMMASOperator, SukruogluRevista, SukruogluJMAA}.

By the aid of equation (\ref{3.10}), we have%
\begin{equation}
	\overset{x}{\underset{j}{\boldsymbol{S}}}\varphi (z)\underset{w}{\Delta }%
	z=\int\limits_{j}^{x}\varphi (t)dt+\overset{x}{\underset{b}{\boldsymbol{S}}}%
	\varphi (z)\underset{w}{\Delta }z  \label{3.12}
\end{equation}%
\cite{Legermaqn}. Since%
\begin{equation*}
	\lim_{w\rightarrow 0}\frac{E^{w}-I}{w}\varphi (x)=\varphi ^{^{\prime }}(x),
\end{equation*}%
where%
\begin{equation*}
	\varphi ^{^{\prime }}(x)=\frac{d}{dx}\left\{ \varphi (x)\right\}. 
\end{equation*}%
Then we have
\begin{equation*}
	\lim_{w\rightarrow 0}\overset{x}{\underset{j}{\boldsymbol{S}}}\varphi (z)%
	\underset{w}{\Delta }z=\int\limits_{j}^{\infty }\varphi (t)dt
\end{equation*}%
\cite{Legermaqn}. 

Consequently, by the same method of Jagerman \cite%
{Legermaqn}, we have the following homogeneous form of the N\"{o}%
rlund sum, which is defined by%
\begin{equation}
	\overset{x}{\underset{j}{\boldsymbol{S}}}\varphi (z)\underset{w}{\Delta }%
	z=\int\limits_{j}^{b}\varphi (t)dt+H(x;w),  \label{3.79}
\end{equation}%
where 
\begin{equation*}
	H(x;w)=\overset{x}{\underset{x}{\boldsymbol{S}}}\varphi (z)\underset{w}{%
		\Delta }z
\end{equation*}%
\cite{Legermaqn}.

The Representation Theorem with (inverse) Laplace transforms is given by%
\begin{equation}
	\overset{x}{\underset{j}{\boldsymbol{S}}}\mathcal{L}\left\{ f(z)\right\} 
	\underset{w}{\Delta }z=\int\limits_{0}^{\infty }f(t)\left( \frac{e^{-jt}}{t}-%
	\frac{we^{-tx}}{1-e^{-wt}}\right) dt,  \label{Rt}
\end{equation}%
where $\mathcal{L}\left\{ f(z)\right\} $ denotes Laplace transform of $f(z)$%
, defined by%
\begin{equation*}
	\mathcal{L}\left\{ f(z)\right\} =\int\limits_{0}^{\infty }e^{-zt}f(t)dt,
\end{equation*}%
when $z>0$ \cite{Bateman,Legermaqn,MilneThomson}.

The Bernoulli polynomials are defined by 
\begin{equation*}
	\frac{te^{tx}}{e^{t}-1}=\sum_{n=0}^{\infty }\frac{B_{n}(x)}{n!}t^{n},
\end{equation*}%
with $B_{0}(x)=1$ \cite{SIMSEKMONTES,SrivastavaChoi}.

The Lerch zeta function is defined by%
\begin{equation}
	\Phi (z,s,a)=\sum_{n=0}^{\infty }\frac{z^{n}}{(n+a)^{s}},  \label{LZ}
\end{equation}%
where $a\in 
\mathbb{C}
\backslash 
\mathbb{Z}
_{0}^{-}$; $s\in 
\mathbb{C}
$ when $\left\vert z\right\vert <1$; $Re(s)>1$ when $\left\vert z\right\vert
=1 $ \cite{SrivastavaChoi}. For $z=-1$ and    $Re(s)>1$ the alternating
Hurwitz zeta functions defined as follows:%
\begin{equation}
	\zeta _{E}\left( s,a\right) :=\Phi (-1,s,a)=\sum_{n=0}^{\infty }\frac{\left(
		-1\right) ^{n}}{(n+a)^{s}},  \label{Alterne}
\end{equation}%
with $b>1$ \cite{SrivastavaChoi}.

Let $\chi $ be a Dirichlet character with conductor $k$. Let $s=x+iy$ with $%
x>1$. The Dirichlet $L$-function is defined by%
\begin{equation}
	L\left( s,\chi \right) =\sum_{n=0}^{\infty }\frac{\chi \left( n\right) }{%
		n^{s}},  \label{El-1}
\end{equation}%
\cite{Apostol,SrivastavaChoi}.

The Hurwitz zeta function is defined by%
\begin{equation*}
	\zeta (s,a)=\sum_{n=0}^{\infty }\frac{1}{(n+a)^{s}},
\end{equation*}%
where $s=\alpha +i\beta $ and $\alpha >1$ \cite{Apostol,KimSimsek,SrivastavaChoi}.

Here $\alpha $ is a fixed real number. When $\alpha =1$ this reduuces to the
Riemann zeta function%
\begin{equation*}
	\zeta (s)=\zeta (s,1).
\end{equation*}%
The function $L\left( s,\chi \right) $ can be expressed in terms of Hurwitz
zeta function%
\begin{equation*}
	L\left( s,\chi \right) =k^{-s}\sum_{v=0}^{k-1}\chi \left( v\right) \zeta
	\left( s,\frac{v}{k}\right) ,
\end{equation*}%
\cite{Apostol,Ireland}.

Substituting $\chi (-1)=-1$ with conductor $k\geq 3$ and $s=1$\ into (\ref%
{El-1}), we have%
\begin{equation*}
	L\left( 1,\chi \right) =\frac{\pi }{2k}\sum_{v=0}^{k-1}\chi \left( v\right)
	\cot \left( \frac{\pi v}{k}\right) .
\end{equation*}%
For $k=4$, the above sum reduces to the following known result:%
\begin{eqnarray}
	L\left( 1,\chi \right)  &=&\frac{\pi }{8}\left( \chi \left( 1\right) \cot
	\left( \frac{\pi }{4}\right) +\chi \left( 2\right) \cot \left( \frac{\pi }{2}%
	\right) +\chi \left( 3\right) \cot \left( \frac{3\pi }{4}\right) \right) 
	\label{El-5} \\
	&=&\frac{\pi }{8}(1-(-1))  \notag \\
	&=&\frac{\pi }{4}.  \notag
\end{eqnarray}%
The Dirichlet beta function is defined by%
\begin{equation*}
	\beta _{n}\left( s\right) =\sum_{k=0}^{\infty }\frac{(-1)^{k}}{(2k+1)^{s}},
\end{equation*}%
where $Re(s)>0$ \cite{Apostol,Ireland,SrivastavaChoi}.
\begin{equation}
	\beta _{n}\left( 2n+1\right) \label{ess1} =(-1)^{n}\frac{E^{*}_{2n}\pi ^{2n+1}}{2^{2n+2}(2n)!},
\end{equation}
where $E^{*}_{2n}$ denotes the Euler numbers of the second kind, which were given in \cite{Comtet,DC}.

Other important results discovered in this paper are also briefly summarized
as follows:

In Section 2, we give novel formulas and results for interpolation functions
of Bernoulli polynomials derived from the N\"{o}rlund sum. Furthermore, we
give the relationships between these functions and the Alternating Hurwitz
zeta function, the reciprocal Catalan sum formula, and the derivative
operator. We also obtain new results by establishing the relationship
between these functions.

In Section 3, we derive new formulas by exploiting the relationship between
the alternating zeta function and the Hurwitz zeta function derived from the
N\"{o}rlund sum. We establish the relationship between the Bernoulli
polynomials of these operators. We also obtain novel results by exploiting
the Dirichlet $L$-function and Dirichlet characters. We use the Representation Theorem for the N\"{o}rlund sum to
give applications of the (inverse) Laplace for some special functions. We
also obtain new results for the homogeneous form of the N\"{o}rlund sum.

In Section 4, we give a conclusion section.
\section{Formulas for interpolation functions of the Bernoulli polynomials
	derive from N\"{o}rlund sum}

In this section, we give some novel formulas derived from the N\"{o}rlund
sum. We also derive the relationships between these formulas and Bernoulli
polynomials, the derivative operator, and the reciprocal Catalan sum
formula. Moreover, by applying the N\"{o}rlund sum to the following function%
\begin{equation*}
	\varphi (z)=\frac{1}{z^{2}+1},
\end{equation*}%
where $z\neq i$, we give another representation for the Bernoulli
polynomials and the alternating Hurwitz zeta function using the N\"{o}rlund
sum and the sum of Laplace transform.

\begin{theorem}
	The following relationship holds true:%
	\begin{equation}
		\overset{x}{\underset{1}{\boldsymbol{S}}}\frac{1}{z^{2}+1}\underset{w}{%
			\Delta }z=\frac{\pi }{4}-\sum_{n=0}^{\infty }\frac{(-1)^{n}}{w^{2n+1}}\zeta
		_{E}\left( 2n+2,\frac{x}{w}\right) .  \label{HZ}
	\end{equation}
\end{theorem}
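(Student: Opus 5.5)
The plan is to evaluate the left-hand side of (\ref{HZ}) through the N\"{o}rlund principal solution (\ref{3.10}) with $j=1$ and $\varphi(z)=\frac{1}{z^{2}+1}$, and cross-check it against the Representation Theorem (\ref{Rt}). I should say at the outset that my computation gives the ordinary Hurwitz zeta function $\zeta(2n+2,\frac{x}{w})$ in the last step, not $\zeta_{E}$. I could not find a legitimate step that turns it into $\zeta_{E}$, so the plan below isolates exactly where the literal statement would have to be justified.

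\textbf{Integral term.} Since $\varphi$ is integrable on $[1,\infty)$, the first term is
\begin{equation*}
\int_{1}^{\infty}\frac{dt}{t^{2}+1}=\frac{\pi}{2}-\arctan 1=\frac{\pi}{4},
\end{equation*}
which accounts for the constant in (\ref{HZ}). The remaining part is $-w\sum_{v=0}^{\infty}\frac{1}{(x+vw)^{2}+1}$.

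\textbf{Series term.} I would express the summand as a Laplace transform, $\frac{1}{z^{2}+1}=\mathcal{L}\{\sin t\}(z)$, so that by (\ref{Rt}) the series term equals
\begin{equation*}
-\int_{0}^{\infty}\sin t\,\frac{w e^{-tx}}{1-e^{-wt}}\,dt .
\end{equation*}
Next I would expand $\sin t=\sum_{n\ge 0}(-1)^{n}\frac{t^{2n+1}}{(2n+1)!}$ and integrate term by term, using
\begin{equation*}
\int_{0}^{\infty}\frac{t^{2n+1}e^{-tx}}{1-e^{-wt}}\,dt=\frac{(2n+1)!}{w^{2n+2}}\sum_{v\ge 0}\Bigl(v+\frac{x}{w}\Bigr)^{-2n-2}.
\end{equation*}
Equivalently, for $x>1$ one can expand $\frac{1}{u^{2}+1}=\sum_{n}(-1)^{n}u^{-2n-2}$ with $u=x+vw$ and interchange the two sums. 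The interchange is justified by absolute convergence, since $\sum_{n}\sum_{v}(x+vw)^{-2n-2}<\infty$ when $x>1$; the general case would follow by analytic continuation in $x$. Either route gives
\begin{equation*}
-\sum_{n\ge 0}\frac{(-1)^{n}}{w^{2n+1}}\sum_{v\ge 0}\Bigl(v+\frac{x}{w}\Bigr)^{-2n-2}.
\end{equation*}

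\textbf{Obstacle: matching $\zeta_{E}$.} The main obstacle is identifying the inner $v$-sum with $\zeta_{E}(2n+2,\frac{x}{w})$ as defined in (\ref{Alterne}). The factor $(-1)^{v}$ that $\zeta_{E}$ requires does not arise in the computation above. It would appear only if the kernel $\frac{w}{1-e^{-wt}}$ were replaced by $\frac{w}{1+e^{-wt}}$, which corresponds to the alternating sum $-w\sum_{v}(-1)^{v}\varphi(x+vw)$ rather than (\ref{3.10}). My first attempt at this step would therefore be:
\begin{itemize}
\item check whether the paper's Nörlund operator is intended with alternating weights (an Euler-type sum);
\item failing that, try the splitting $\zeta(s,a)=\zeta_{E}(s,a)+2^{1-s}\zeta\bigl(s,\tfrac{a+1}{2}\bigr)$ to see whether the extra terms cancel after summing over $n$.
\end{itemize}
I do not expect the cancellation to occur. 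If it does not, (\ref{HZ}) holds with $\zeta$ in place of $\zeta_{E}$, and the literal $\zeta_{E}$ form holds only for the alternating Nörlund sum.
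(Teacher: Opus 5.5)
Your computation is essentially the paper's proof. The paper also feeds $f(t)=\sin t$ into the Representation Theorem (\ref{Rt}), uses $\int_0^\infty e^{-t}\sin t\,\frac{dt}{t}=\frac{\pi}{4}$, expands $\sin t$ and $\frac{1}{1-e^{-wt}}$ in series, and integrates termwise. It reaches exactly your penultimate expression
\[
\frac{\pi}{4}-\sum_{n=0}^{\infty}\frac{(-1)^{n}}{w^{2n+1}}\sum_{m=0}^{\infty}\frac{1}{\left(m+\frac{x}{w}\right)^{2n+2}}.
\]
It then finishes with ``combining the above equation with (\ref{Alterne})''. That is precisely the identification you could not justify: the inner sum has no factor $(-1)^{m}$, so it is $\zeta(2n+2,\frac{x}{w})$, not $\zeta_{E}(2n+2,\frac{x}{w})$. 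Your diagnosis is correct. Under the paper's own definitions (\ref{3.10}), (\ref{Rt}) and (\ref{Alterne}), the displayed identity is false, and what can actually be proved is the version with the Hurwitz zeta function.

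You can close your ``obstacle'' outright instead of leaving it as an expectation. For $w>0$ and $x>1$, sum over $n$ first, using $\sum_{n\ge 0}(-1)^{n}u^{-2n-2}=\frac{1}{u^{2}+1}$ for $u>1$:
\begin{itemize}
\item the $\zeta_{E}$ version of the right-hand side becomes $\frac{\pi}{4}-w\sum_{m\ge 0}\frac{(-1)^{m}}{(x+mw)^{2}+1}$;
\item (\ref{3.10}) gives $\frac{\pi}{4}-w\sum_{m\ge 0}\frac{1}{(x+mw)^{2}+1}$.
\end{itemize}
The two differ by $2w\sum_{m\ \mathrm{odd}}\frac{1}{(x+mw)^{2}+1}>0$. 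This is exactly your term $2^{1-s}\zeta(s,\frac{a+1}{2})$ summed against $\frac{(-1)^{n}}{w^{2n+1}}$, so no cancellation can occur. For example, at $w=1$, $x=2$ the two sides are about $0.21$ and $0.65$. The paper's later relation (\ref{El-4}) does not rescue the statement either: it is the Dirichlet eta identity and holds only at $\frac{x}{w}=1$.

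One correction to your own write-up: drop the appeal to analytic continuation. In the corrected identity the $n$-th term has absolute value $w\sum_{m}(x+mw)^{-2n-2}\ge w\,x^{-2n-2}$. The series therefore diverges for $0<x\le 1$, and there is nothing to continue. State the result with the hypotheses $w>0$, $x>1$. These are also exactly what justify the termwise integration, since $\sum_{n}\frac{t^{2n+1}}{(2n+1)!}=\sinh t$ and $\int_{0}^{\infty}\sinh t\,\frac{we^{-xt}}{1-e^{-wt}}\,dt<\infty$ if and only if $x>1$.
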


\begin{proof}
	Substituting%
	\begin{equation*}
		\varphi (z)=\frac{1}{z^{2}+1}
	\end{equation*}%
	into the Representation Theorem (or the sum of Laplace transform), given in 
	\cite{Legermaqn}, we get%
	\begin{equation}
		\overset{x}{\underset{1}{\boldsymbol{S}}}\frac{1}{z^{2}+1}\underset{w}{%
			\Delta }z=\int\limits_{0}^{\infty }\left( \frac{e^{-t}}{t}-\frac{we^{-xt}}{%
			1-e^{-wt}}\right) \sin tdt.  \label{Rt-1}
	\end{equation}%
	Combining the above equation with%
	\begin{equation*}
		\sin t=\sum_{n=0}^{\infty }(-1)^{n}\frac{t^{2n+1}}{(2n+1)!},
	\end{equation*}%
	we get%
	\begin{eqnarray*}
		\overset{x}{\underset{1}{\boldsymbol{S}}}\frac{1}{z^{2}+1}\underset{w}{%
			\Delta }z &=&\int\limits_{0}^{\infty }\frac{e^{-t}}{t}\sin tdt \\
		&&-w\sum_{n=0}^{\infty }\sum_{m=0}^{\infty }\frac{(-1)^{n}}{(2n+1)!}%
		\int\limits_{0}^{\infty }t^{2n+1}e^{-t(mw+x)}dt.
	\end{eqnarray*}%
	After some calculation, and using the following well-known the Laplace
	formula for the function $\frac{\sin t}{t}$:%
	\begin{equation*}
		\int\limits_{0}^{\infty }\frac{e^{-t}}{t}\sin tdt=\frac{\pi }{4},
	\end{equation*}%
	yields%
	\begin{equation*}
		\overset{x}{\underset{1}{\boldsymbol{S}}}\frac{1}{z^{2}+1}\underset{w}{%
			\Delta }z=\frac{\pi }{4}-\sum_{n=0}^{\infty }\frac{(-1)^{n}}{w^{2n+1}}%
		\sum_{m=0}^{\infty }\frac{1}{(m+\frac{x}{w})^{2n+2}}.
	\end{equation*}%
	Combining the above equation with (\ref{Alterne}), we also get%
	\begin{equation*}
		\overset{x}{\underset{1}{\boldsymbol{S}}}\frac{1}{z^{2}+1}\underset{w}{%
			\Delta }z=\frac{\pi }{4}-\sum_{n=0}^{\infty }\frac{(-1)^{n}}{w^{2n+1}}\zeta
		_{E}\left( 2n+2,\frac{x}{w}\right) .
	\end{equation*}%
	The proof is completed.
\end{proof}

Putting the following the reciprocal Catalan sum formula, given by \cite{Edgar,GunSimsek}:%
\begin{equation*}
	\sum_{k=0}^{\infty }\frac{2^{k}}{C_{k}}=5+6\arctan (1)
\end{equation*}%
in (\ref{HZ}), we arrive at the following corollary:

\begin{corollary}
	The following relationship holds true:%
	\begin{equation}
		\overset{x}{\underset{1}{\boldsymbol{S}}}\frac{1}{z^{2}+1}\underset{w}{%
			\Delta }z=\frac{1}{6}\sum_{k=0}^{\infty }\frac{2^{k}}{C_{k}}%
		-\sum_{n=0}^{\infty }\frac{(-1)^{n}}{w^{2n+1}}\zeta _{E}\left( 2n+2,\frac{x}{%
			w}\right) -\frac{5}{6}.  \label{El-2}
	\end{equation}
\end{corollary}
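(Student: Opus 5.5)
The corollary follows from the theorem (equation (\ref{HZ})) once $\pi/4$ is rewritten through the reciprocal Catalan sum. So the plan is to express $\pi/4$ in terms of $\sum_{k\ge0}2^k/C_k$ and substitute into the right-hand side of (\ref{HZ}). Nothing about the Nörlund sum or the series $\sum_{n}(-1)^n w^{-2n-1}\zeta_E(2n+2,x/w)$ needs to be reopened: the corollary is an identity between constants, and the remaining terms pass through unchanged.

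\textbf{Step 1.} Since $\arctan(1)=\pi/4$, the cited formula of \cite{Edgar,GunSimsek} reads
\begin{equation*}
\sum_{k=0}^{\infty}\frac{2^{k}}{C_{k}}=5+\frac{6\pi}{4}=5+6\cdot\frac{\pi}{4}.
\end{equation*}
Here $C_k$ denotes the Catalan numbers. Solving for $\pi/4$ gives
\begin{equation*}
\frac{\pi}{4}=\frac{1}{6}\sum_{k=0}^{\infty}\frac{2^{k}}{C_{k}}-\frac{5}{6}.
\end{equation*}

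\textbf{Step 2.} Substitute this expression for the constant $\pi/4$ in (\ref{HZ}). That constant came from $\int_0^\infty e^{-t}t^{-1}\sin t\,dt$ in the proof of the theorem. Rearranging the terms gives exactly (\ref{El-2}).

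\textbf{Main obstacle.} The only point needing care is that the Catalan series converges, so that it may be treated as a finite real number. This holds because $C_k\sim 4^k/(k^{3/2}\sqrt{\pi})$, so $2^k/C_k\sim\sqrt{\pi}\,k^{3/2}2^{-k}$, and the series converges by comparison with a geometric series. Its closed value is taken from the cited references. If one wanted to verify it independently, I would use the beta-integral representation $1/\binom{2k}{k}=(2k+1)\int_0^1 t^k(1-t)^k\,dt$, sum the resulting series under the integral, and evaluate the elementary integral, which produces an arctangent term. Given the citation, however, the corollary is an immediate algebraic rewriting of the theorem.
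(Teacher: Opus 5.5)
Your proposal is correct and matches the paper's argument exactly: the paper solves the reciprocal Catalan identity $\sum_{k\ge0}2^k/C_k=5+6\arctan(1)$ for $\pi/4=\frac{1}{6}\sum_{k\ge0}2^k/C_k-\frac{5}{6}$ and substitutes this into (\ref{HZ}), leaving the $\zeta_E$-series unchanged. Your convergence check and the optional beta-integral verification, which does evaluate to $5+\frac{3\pi}{2}$, are correct additions that the paper does not include.
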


\begin{theorem}
	Let $x\in \mathbb{R}$. Then we have%
	\begin{equation}
		\overset{x}{\underset{1}{\boldsymbol{S}}}\frac{1}{z^{2}+1}\underset{w}{%
			\Delta }z =\frac{\pi }{4}  \label{BB} \\
		-\sum_{v=1}^{\infty }w^{v}\frac{B_{v}(x;w)}{v!}\frac{d^{v-1}}{dx^{v-1}}%
		\left( \frac{1}{1+x^{2}}\right) .  
	\end{equation}
\end{theorem}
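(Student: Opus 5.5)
Starting point is the integral representation (\ref{Rt-1}) from the proof of the previous theorem, which comes from the Representation Theorem (\ref{Rt}) with $f(t)=\sin t$, i.e. $\mathcal{L}\{\sin t\}(z)=1/(z^{2}+1)$:
\begin{equation*}
\overset{x}{\underset{1}{\boldsymbol{S}}}\frac{1}{z^{2}+1}\underset{w}{\Delta }z=\int\limits_{0}^{\infty }\frac{e^{-t}}{t}\sin t\,dt-\int\limits_{0}^{\infty }\frac{we^{-xt}}{1-e^{-wt}}\sin t\,dt .
\end{equation*}
The first integral is already known to equal $\pi/4$, so the whole task is to expand the second integral as the stated Bernoulli-polynomial series. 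Here I read $B_{v}(x;w)$ as the Bernoulli polynomials with span $w$ (the Nörlund polynomials), with generating function $\frac{wt\,e^{xt}}{e^{wt}-1}=\sum_{v\geq 0}B_{v}(x;w)\frac{t^{v}}{v!}$, so that $B_v(x;w)=w^vB_v(x/w)$ in the classical normalization. I expect to have to adjust the factor $w^{v}$ in the final formula to match the normalization used in \cite{Legermaqn}.

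The main step is to rewrite the kernel using this generating function. Since
\begin{equation*}
\frac{we^{-xt}}{1-e^{-wt}}=\frac{we^{(w-x)t}}{e^{wt}-1},
\end{equation*}
I would either expand in powers of $t$ directly or apply the Nörlund identity $B_{v}(w-x;w)=(-1)^{v}B_{v}(x;w)$ to write the kernel as $\frac{1}{t}\sum_{v\geq 0}(-1)^{v}B_{v}(x;w)\frac{t^{v}}{v!}$. Integrating termwise against $\sin t$ is illegitimate on all of $(0,\infty)$, so I would argue in the dual form. Write $\frac{1}{z^{2}+1}=\mathcal{L}\{\sin t\}(z)$, so that $\int_0^\infty t^{v-1}e^{-xt}\sin t\,dt=(-1)^{v-1}\frac{d^{v-1}}{dx^{v-1}}\frac{1}{1+x^{2}}$. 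This is the exact identity linking the powers $t^{v-1}$ to derivatives in $x$. Substituting the $t$-expansion of the kernel, with its $e^{-xt}$ factor, and integrating term by term then produces $\sum_{v} \frac{B_{v}(\cdot)}{v!}\frac{d^{v-1}}{dx^{v-1}}\frac{1}{1+x^{2}}$. Equivalently, this is the Euler--Maclaurin/Nörlund expansion of $w\sum_{m\ge0}\varphi(x+mw)$ in terms of $\int_x^\infty\varphi$ and the derivatives $\varphi^{(v-1)}(x)$. The $v=0$ term gives $\int_x^\infty \varphi$, and combining it with the $\int_1^\infty$ piece should account for the constant, consistent with (\ref{3.12}).

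The main obstacle is justification and bookkeeping rather than the idea. The generating function $\frac{wt}{e^{wt}-1}$ has radius of convergence $2\pi/|w|$, so the termwise integration is only formal and the resulting series is at best asymptotic. I would accept it formally, in the spirit of the paper, treating both sides as formal power series in $w$. Alternatively, I would restrict to small $w$ and to functions whose Borel transform decays, which holds here because $\varphi(z)=\frac{1}{z^2+1}$ has derivatives bounded by $v!$ times a geometric factor. The other delicate point is matching signs and the $v=0$ and $v=1$ terms: the alternating signs $(-1)^{v}$ from the derivative identity and from the reflection $B_v(w-x;w)=(-1)^{v}B_{v}(x;w)$ must cancel, and the $v=0$ term must be absorbed into the constant $\pi/4$. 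I would check both by comparing the $w^{1}$ coefficient with the direct expansion of (\ref{HZ}).
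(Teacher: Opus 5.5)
Your route is the paper's route: the Representation Theorem with $f(t)=\sin t$, a Bernoulli expansion of the kernel, and termwise use of the Laplace derivative identity. But the steps you defer as bookkeeping are exactly where it breaks, and they cannot be repaired, because (\ref{BB}) is not true as stated.

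The first gap is in the kernel expansion. In $\frac{we^{-xt}}{1-e^{-wt}}=\frac{1}{t}\sum_{v\geq 0}(-1)^{v}B_{v}(x;w)\frac{t^{v}}{v!}$ the factor $e^{-xt}$ has already been absorbed into $B_{v}(x;w)$. Termwise you therefore get $\int_{0}^{\infty}t^{v-1}\sin t\,dt$, which diverges for $v\geq 1$ and contains no $x$-derivatives. To use $\int_{0}^{\infty}t^{v-1}e^{-xt}\sin t\,dt=(-1)^{v-1}\frac{d^{v-1}}{dx^{v-1}}\frac{1}{1+x^{2}}$, you must keep $e^{-xt}$ and expand only $\frac{w}{1-e^{-wt}}=\frac{1}{t}\sum_{v\geq 0}(-1)^{v}B_{v}\frac{(wt)^{v}}{v!}$, whose coefficients are Bernoulli \emph{numbers}. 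Formally, the $x$-dependence sits either in $B_{v}(\frac{x}{w})$, with derivatives then taken at $0$, or in derivatives at $x$, with Bernoulli numbers as coefficients, but never in both. Your ``$B_{v}(\cdot)$'' hides this choice. The paper's proof has the same conflation, also drops the $v=0$ term, and quotes the derivative identity without the factor $(-1)^{v-1}$ that you correctly include.

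Done consistently (formally, for $x>0$), the signs $(-1)^{v}$ and $(-1)^{v-1}$ multiply to $-1$ rather than cancel. The $v=0$ term is $\int_{0}^{\infty}\frac{\sin t}{t}e^{-xt}\,dt=\arctan\frac{1}{x}=\int_{x}^{\infty}\frac{dt}{1+t^{2}}$. It depends on $x$ and cannot be absorbed into $\frac{\pi}{4}$; combined with the first integral it gives $\int_{1}^{x}\frac{dt}{1+t^{2}}=\arctan x-\frac{\pi}{4}$. The result is the Euler--Maclaurin expansion
\[
\overset{x}{\underset{1}{\boldsymbol{S}}}\frac{1}{z^{2}+1}\underset{w}{\Delta }z\sim \arctan x-\frac{\pi}{4}+\sum_{v\geq 1}\frac{w^{v}B_{v}}{v!}\,\frac{d^{v-1}}{dx^{v-1}}\left(\frac{1}{1+x^{2}}\right),\qquad w\rightarrow 0^{+}.
\]
This already disagrees with (\ref{BB}) at order $w^{0}$:
\begin{itemize}
\item by (\ref{3.10}), the left side tends to $\arctan x-\frac{\pi}{4}$ as $w\rightarrow 0^{+}$, since it is a Riemann sum;
\item the right side of (\ref{BB}), regrouped in powers of $w$, has constant term $\frac{\pi}{4}$ if $B_{v}(x;w)=w^{v}B_{v}(\frac{x}{w})$;
\item it has constant term $\frac{\pi}{4}+\arctan x-\arctan 2x$ if $B_{v}(x;w)=B_{v}(\frac{x}{w})$, the normalization behind the paper's later $B_{1}(x;w)=\frac{x}{w}-\frac{1}{2}$.
\end{itemize}
So no normalization rescues the statement. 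Your proposed $w^{1}$ check would also fail, since the true coefficient is $-\frac{1}{2(1+x^{2})}$. Check against (\ref{3.10}) directly, because (\ref{HZ}) should contain $\zeta$, not $\zeta_{E}$.

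Small $w$ does not give convergence either. For $x>0$ the series diverges for every $w\neq 0$, since $\left|\frac{d^{v-1}}{dx^{v-1}}\frac{1}{1+x^{2}}\right|=(v-1)!\,(1+x^{2})^{-v/2}\left|\sin\left(v\arctan\frac{1}{x}\right)\right|$ while $|B_{v}|/v!\sim 2(2\pi)^{-v}$ for even $v$. The factorial growth of the derivatives is the cause of the divergence, not a cure for it, so only an asymptotic statement in $w$ is available.
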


\begin{proof}
	Putting the following Laplace formulas%
	\begin{equation*}
		L\left\{ \frac{e^{-t}}{t}\sin t\right\} =\cot ^{-1}(1)=\frac{\pi }{4}
	\end{equation*}%
	and%
	\begin{equation*}
		L\left\{ \sin t\right\} =\frac{1}{x^{2}+1}
	\end{equation*}
	in (\ref{Rt-1}), we get%
	\begin{eqnarray*}
		\overset{x}{\underset{1}{\boldsymbol{S}}}\frac{1}{z^{2}+1}\underset{w}{%
			\Delta }z &=&\frac{\pi }{4}-\frac{wB_{1}(x;w)}{1!}\int\limits_{0}^{\infty
		}\sin te^{-xt}dt \\
		&&+\sum_{v=2}^{\infty }(-w)^{v}\frac{B_{v}(x;w)}{v!}\int\limits_{0}^{\infty
		}t^{v-1}\sin te^{-xt}dt,
	\end{eqnarray*}%
	which is reducesed to%
	\begin{eqnarray}
		\overset{x}{\underset{1}{\boldsymbol{S}}}\frac{1}{z^{2}+1}\underset{w}{%
			\Delta }z &=&\frac{\pi }{4}-\frac{wB_{1}(x;w)}{1+x^{2}}  \label{El-3} \\
		&&-\sum_{v=2}^{\infty }w^{v}\frac{B_{v}(x;w)}{v!}\frac{d^{v-1}}{dx^{v-1}}%
		\left( \frac{1}{1+x^{2}}\right),  \notag
	\end{eqnarray}%
	since%
	\begin{equation*}
		\int\limits_{0}^{\infty }t^{v-1}\sin te^{-xt}dt=\frac{d^{v-1}}{dx^{v-1}}%
		\left( \frac{1}{1+x^{2}}\right) .
	\end{equation*}
	
	Thus, after some calculations, proof of theorem is completed.
	
\end{proof}

\begin{theorem}
	Let $x\in \mathbb{R}$. Then we have%
	\begin{eqnarray*}
		\overset{x}{\underset{1}{\boldsymbol{S}}}\frac{1}{z^{2}+1}\underset{w}{%
			\Delta }z &=&\frac{\pi }{4}-\frac{2x-w}{2\left( 1+x^{2}\right) }%
		-\sum_{v=2}^{\infty }w^{v}\frac{B_{v}(x;w)}{v!} \\
		&&\times \sum_{j=0}^{v}\sum_{n=0}^{\infty }\sum_{j=0}^{n}\sum_{c=0}^{\left[ 
			\frac{n-1-j}{2}\right] }\frac{(-1)^{j+n+c+1}}{\left( w+j\right) ^{n+1}}%
		\binom{n}{j}\binom{n-j}{2c+1}\binom{v}{j} \\
		&&\times \lambda ^{j}x^{n-j-2c-1}\mathcal{B}_{j}^{(v)}\left( \lambda \right)
		.
	\end{eqnarray*}
\end{theorem}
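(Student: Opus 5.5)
The plan is to start from the formula (\ref{El-3}) in the proof of the previous theorem and treat its two correction terms separately. The $v=1$ term needs $B_{1}(x;w)$. Since the generating function in (\ref{Rt-1}) has span $w$, the relevant Bernoulli polynomial is the one obtained from $\frac{we^{-xt}}{1-e^{-wt}}$, so $B_{1}(x;w)=x-\frac{w}{2}$ up to the normalisation used there. This gives $\frac{wB_{1}(x;w)}{1+x^{2}}$ in the displayed form $\frac{2x-w}{2(1+x^{2})}$, after absorbing the factor of $w$ into that normalisation. I would check the sign convention for $(-w)^{v}$ in (\ref{El-3}) at this point, since it fixes the sign of the second term.

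For the terms with $v\geq 2$, I would keep the factor $w^{v}B_{v}(x;w)/v!$ and re-express the derivative $\frac{d^{v-1}}{dx^{v-1}}\left(\frac{1}{1+x^{2}}\right)$. Equivalently, I would rewrite the Laplace integral $\int_{0}^{\infty}t^{v-1}\sin t\,e^{-xt}dt$ as a series. The approach is to shift the exponential by writing $e^{-xt}=e^{-(x+w)t}e^{wt}$ and to expand $\sin t=\mathrm{Im}\,e^{it}$ via the binomial theorem. Expanding $(x+i)^{n}$ as $\sum_{m}\binom{n}{m}x^{n-m}i^{m}$, the imaginary part keeps only the odd $m=2c+1$. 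This produces the factor $\binom{n-j}{2c+1}x^{n-j-2c-1}(-1)^{c}$ and the upper limit $\left[\frac{n-1-j}{2}\right]$. The inner $t$-integrals then become Gamma integrals, which give the denominators $(w+j)^{n+1}$ after substituting $u=(w+j)t$.

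To bring in the higher-order Apostol--Bernoulli polynomials, I would expand the kernel $t^{v-1}$ times the remaining exponential in the generating function $\left(\frac{t}{\lambda e^{t}-1}\right)^{v}e^{xt}=\sum_{n}\mathcal{B}_{n}^{(v)}(x;\lambda)\frac{t^{n}}{n!}$. I would then use the addition formula $\mathcal{B}_{n}^{(v)}(x+y;\lambda)=\sum_{j}\binom{n}{j}\mathcal{B}_{j}^{(v)}(x;\lambda)y^{n-j}$. Evaluating at the point where the polynomial reduces to $\mathcal{B}_{j}^{(v)}(\lambda)$ produces the factors $\binom{n}{j}\lambda^{j}$. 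Collecting the three index sums over $n$, $j$, and $c$ should give the triple sum in the statement; the outer $\sum_{j=0}^{v}$ with $\binom{v}{j}$ arises from expanding $(\lambda e^{t}-1)^{v}$ binomially.

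The main obstacle is matching indices: the statement reuses $j$ for two different sums. I would first fix a consistent naming and then verify that interchanging the infinite sums with the integrals is legitimate. This holds for $x>0$ by dominated convergence, using $|\sin t|\le t$ together with the exponential decay. After that, I would check low-order cases such as $v=2$ numerically, to confirm the signs $(-1)^{j+n+c+1}$ before finalising the bookkeeping.
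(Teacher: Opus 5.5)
\textbf{There is a genuine gap: your derivation of the expansion (\ref{e11}) fails at two points.} The paper never derives the expansion of $\frac{d^{v-1}}{dx^{v-1}}\bigl(\frac{1}{1+x^{2}}\bigr)$. It quotes the formula of \cite{Kilar} for $\frac{y}{y^{2}+x^{2}}$, puts $y=1$ to get (\ref{e11}), substitutes this into (\ref{BB}), and uses $B_{1}(x;w)=\frac{x}{w}-\frac{1}{2}$. You instead propose to prove (\ref{e11}) from the Laplace integral.

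The first failure is the interchange you justify by dominated convergence. The function $\left(\frac{t}{\lambda e^{t}-1}\right)^{v}e^{xt}\sin t$ has poles at the zeros $t=-\mathrm{Log}\,\lambda+2\pi ik$ of $\lambda e^{t}-1$. So for $0<|\lambda|<1$ its Maclaurin series has a finite radius $\rho$ and diverges for $t>\rho$. The obstruction is the series, not the integrand, so $|\sin t|\le t$ and exponential decay do not help. Accordingly $\limsup_{n}\bigl(|\mathcal{B}^{(S,v)}_{n}(x,1;\lambda)|/n!\bigr)^{1/n}=1/\rho$. Hence the terms $\mathcal{B}^{(S,v)}_{n}(x,1;\lambda)(w+j)^{-n-1}$ are unbounded, and the $n$-series is at best an asymptotic expansion in $w$.

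The second failure is that, even formally, your bookkeeping cannot reach the displayed right-hand side. The quantity you need, $\int_{0}^{\infty}t^{v-1}e^{-xt}\sin t\,dt$, involves neither $w$ nor $\lambda$. Realise your sketch: put $e^{xt}\sin t$ into the generating function, replace $t$ by $-t$, expand $(\lambda e^{-t}-1)^{v}$ binomially, and let the Laplace kernel supply $e^{-wt}$. This gives formally
\begin{equation*}
\int_{0}^{\infty}t^{v}e^{-(x+w)t}\sin t\,dt=\sum_{j=0}^{v}\binom{v}{j}\lambda^{j}\sum_{n=0}^{\infty}\frac{(-1)^{j+n+1}}{(w+j)^{n+1}}\mathcal{B}^{(S,v)}_{n}(x,1;\lambda).
\end{equation*}
This is exactly the right-hand side of the quoted formula. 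Its left-hand side, however, is $(-1)^{v}\frac{d^{v}}{ds^{v}}\bigl(\frac{1}{1+s^{2}}\bigr)$ at $s=x+w$, for two reasons:
\begin{itemize}
\item The denominators $(w+j)^{n+1}$ force the kernel $t^{v}$; the kernel $t^{v-1}$ would yield $\frac{1}{n(w+j)^{n}}$.
\item The $x$ sitting inside the generating function forces the shifted point $x+w$.
\end{itemize}
Your rewriting $e^{-xt}=e^{-(x+w)t}e^{wt}$ leaves a factor $e^{wt}$ that nothing absorbs.

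The quantity above tends to $0$ as $w\to\infty$, while $\frac{d^{v-1}}{dx^{v-1}}\bigl(\frac{1}{1+x^{2}}\bigr)$ does not depend on $w$. So no renaming of the clashing index $j$ and no sign check at $v=2$ will yield (\ref{e11}). The statement is reachable only by importing (\ref{e11}) as the paper does. The mismatch then lies in that formula as printed, not in a step you could repair.

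A smaller point concerns the $v=1$ term. Expanding $\frac{we^{-xt}}{1-e^{-wt}}=\sum_{v\ge 0}(-w)^{v}\frac{B_{v}(x/w)}{v!}t^{v-1}$ shows $B_{v}(x;w)=B_{v}(x/w)$. Then $wB_{1}(x;w)=x-\frac{w}{2}$ gives $\frac{2x-w}{2(1+x^{2})}$ directly. Taking $B_{1}(x;w)=x-\frac{w}{2}$ and ``absorbing'' the $w$ is not a consistent convention.
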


\begin{proof}
	We begin with the following formulas, which are given in \cite{Kilar},%
	\begin{equation*}
		\frac{d^{v-1}}{dx^{v-1}}\left( \frac{y}{y^{2}+x^{2}}\right) =\sum_{j=0}^{v}%
		\binom{v}{j}\lambda ^{j}\sum_{n=0}^{\infty }(-1)^{j+n+1}\frac{1}{\left(
			w+j\right) ^{n+1}}\mathcal{B}_{n}^{(S,v)}\left( x,y;\lambda \right),
	\end{equation*}%
	where%
	\begin{equation*}
		\mathcal{B}_{n}^{(S,v)}\left( x,y;\lambda \right) =\sum_{j=0}^{n}\binom{n}{j}%
		\mathcal{B}_{j}^{(v)}\left( \lambda \right) S_{n-j}(x,y)
	\end{equation*}%
	and%
	\begin{equation*}
		S_{n-j}(x,y)=\sum_{c=0}^{\left[ \frac{n-1-j}{2}\right] }\left( -1\right) ^{c}%
		\binom{n-j}{2c+1}x^{n-j-2c-1}y^{2c+1},
	\end{equation*}%
	for $y=1$, we get
	\begin{eqnarray}
		&&\frac{d^{v-1}}{dx^{v-1}}\left( \frac{1}{1+x^{2}}\right)  \label{e11} \\
		&=&\sum_{j=0}^{v}\binom{v}{j}\lambda   \notag ^{j}\sum_{n=0}^{\infty
		}\sum_{j=0}^{n}\sum_{c=0}^{\left[ \frac{n-1-j}{2}\right] }\frac{%
			(-1)^{j+n+c+1}}{\left( w+j\right) ^{n+1}}\binom{n}{j}\mathcal{B}%
		_{j}^{(v)}\left( \lambda \right) \binom{n-j}{2c+1}x^{n-j-2c-1},
	\end{eqnarray}%
	where $\left\vert \lambda \right\vert <1$. 
	We can rewrite the right hand side of equation (\ref{BB}) as follows
	\begin{equation*}
		\overset{x}{\underset{1}{\boldsymbol{S}}}\frac{1}{z^{2}+1}\underset{w}{%
			\Delta }z =\frac{\pi }{4} \\
		-wB_{1}(x;w)-\sum_{v=2}^{\infty }w^{v}\frac{B_{v}(x;w)}{v!}\frac{d^{v-1}}{dx^{v-1}}%
		\left( \frac{1}{1+x^{2}}\right) .  
	\end{equation*}
	
	Combining the above formula with (\ref{e11}), and using $B_{1}(x;w)=\frac{x}{w}-\frac{1}{2}$, after some
	elementary calculations, we complete proof of theorem.
\end{proof}

By combining (\ref{HZ}) with (\ref{BB}), after some elementary calculations, we arrive at the following theorem:

\begin{theorem}
	Let $x\in \mathbb{R}$. Then we have%
	\begin{eqnarray*}
		&&\sum_{n=0}^{\infty }\frac{(-1)^{n}}{w^{2n+1}}\zeta _{E}\left( 2n+2,\frac{x%
		}{w}\right)  \\
		&=&\frac{2x-w}{2\left( 1+x^{2}\right) }+\sum_{v=2}^{\infty }w^{v}\frac{%
			B_{v}(x;w)}{v!}\sum_{j=0}^{v}\sum_{n=0}^{\infty }\sum_{j=0}^{n}\sum_{c=0}^{%
			\left[ \frac{n-1-j}{2}\right] }\frac{(-1)^{j+n+c+1}}{\left( w+j\right)
			^{n+1}} \\
		&&\times \binom{n}{j}\binom{n-j}{2c+1}\binom{v}{j}\lambda ^{j}x^{n-j-2c-1}%
		\mathcal{B}_{j}^{(v)}\left( \lambda \right) .
	\end{eqnarray*}
\end{theorem}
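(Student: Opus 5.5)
The statement should follow by equating two expressions already obtained for the same N\"{o}rlund sum $\overset{x}{\underset{1}{\boldsymbol{S}}}\frac{1}{z^{2}+1}\underset{w}{\Delta }z$. These are (\ref{HZ}) from the first theorem of this section and the fully expanded form from the preceding theorem, which was itself obtained from (\ref{BB}) and (\ref{e11}). No new analytic input should be needed; the proof is a rearrangement.

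The steps are as follows.

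\begin{enumerate}
\item Write the left-hand side of (\ref{HZ}) as $\frac{\pi}{4}$ minus the alternating Hurwitz series $\sum_{n\ge 0}(-1)^n w^{-2n-1}\zeta_E(2n+2,x/w)$.

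\item Write the left-hand side of the preceding theorem as $\frac{\pi}{4}-\frac{2x-w}{2(1+x^2)}$ minus the quadruple sum built from $B_v(x;w)$, the binomial coefficients, $\lambda^j$ and $\mathcal{B}_j^{(v)}(\lambda)$.

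\item Set the two right-hand sides equal. The constant $\frac{\pi}{4}$ (the Laplace transform value $\int_0^\infty e^{-t}t^{-1}\sin t\,dt$) appears on both sides and cancels.

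\item Multiply by $-1$. This isolates $\sum_{n}(-1)^n w^{-2n-1}\zeta_E(2n+2,x/w)$ on the left. On the right it leaves $\frac{2x-w}{2(1+x^2)}$ plus the same quadruple sum with its sign flipped, which is exactly the displayed identity.
\end{enumerate}

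The rational term should be checked against (\ref{El-3}). There the $v=1$ term is $-wB_1(x;w)/(1+x^2)$, and with $B_1(x;w)=\frac{x}{w}-\frac12$ this gives $-\frac{x-w/2}{1+x^2}=-\frac{2x-w}{2(1+x^2)}$. This matches the preceding theorem and fixes the sign of the first term on the right after negation.

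The main obstacle is not the algebra but justifying that all series involved converge, so that the equalities being equated are genuine identities. The alternating Hurwitz series converges for $x/w>0$, since $\zeta_E(2n+2,x/w)$ is bounded by roughly $(w/x)^{2n+2}$; the geometric factor then needs $w/x$ small enough, or else one accepts the identity formally. The expansion in (\ref{e11}) additionally needs $|\lambda|<1$, and one must assume it converges and may be substituted termwise. I would state these as standing assumptions inherited from the earlier theorems ($x>0$, and $w$, $\lambda$ in the ranges where the earlier theorems hold) rather than reprove them. The result then follows immediately as a corollary of the two earlier representations.
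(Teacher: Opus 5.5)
Your proposal is the paper's argument: the paper obtains this theorem by combining (\ref{HZ}) with (\ref{BB}), expanded through (\ref{e11}) as in the preceding theorem, then cancelling $\frac{\pi}{4}$ and changing sign, and your check of the $v=1$ term via $B_1(x;w)=\frac{x}{w}-\frac{1}{2}$ is also correct. One slip in your convergence aside: since $0<\zeta_E(2n+2,x/w)\le (w/x)^{2n+2}$ and the two sides are asymptotically equal as $n\to\infty$, the $n$-th term $w^{-2n-1}\zeta_E(2n+2,x/w)$ behaves like $w\,x^{-2n-2}$, so for $w>0$ the series converges exactly when $x>1$ --- not when $w/x$ is small, and not under the weaker assumption $x>0$ --- which is also the condition ($|x+mw|>1$ for all $m\ge 0$) under which the termwise expansion of $\frac{1}{(x+mw)^{2}+1}$ behind (\ref{HZ}) is valid.
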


\section{Infinite series involving alternating zeta function and the Hurwitz
	zeta function}

In this section, we give some new formulas for the N\"{o}rlund sum using the
relationship between the alternating zeta function and the Hurwitz zeta
function. We also derive new results for the Dirichlet $L$-function attached to the Dirichlet characters and the N\"{o}rlund sum using certain special trigonometric
functions. Finally, we give some novel formulas for infinite series
involving alternating zeta function and the Hurwitz zeta function with the
aid of the Dirichlet $L$-function. In order to give these formulas, we use
the following formula:%
\begin{equation}
	\zeta _{E}\left( 2n+2,\frac{x}{w}\right) =\left( 1-2^{-1-2n}\right) \zeta
	\left( 2n+2,\frac{x}{w}\right)  \label{El-4}
\end{equation}%
\cite{Abramowitz,Gradshteyn}.

Combining (\ref{HZ}) with (\ref{El-4}), we get%
\begin{equation*}
	\overset{x}{\underset{1}{\boldsymbol{S}}}\frac{1}{z^{2}+1}\underset{w}{%
		\Delta }z=\frac{\pi }{4}-\sum_{n=0}^{\infty }\frac{(-1)^{n}}{w^{2n+1}}\left(
	1-2^{-1-2n}\right) \zeta \left( 2n+2,\frac{x}{w}\right) .
\end{equation*}

Combining the above equation with the following formula,  given by
\cite{Abramowitz} and \cite{Gradshteyn},%
\begin{equation*}
	\zeta \left( 2n+2,\frac{x}{w}\right) =\frac{(-1)^{n+1}(2\pi )^{2n}}{2(2n)!}%
	B_{2n+2}\left( \frac{x}{w}\right) ,
\end{equation*}

we get the following theorem:

\begin{theorem}
	The following relationship holds true:
	\begin{equation}
		\overset{x}{\underset{1}{\boldsymbol{S}}}\frac{1}{z^{2}+1}\underset{w}{%
			\Delta }z+\sum_{n=0}^{\infty }\frac{(-1)^{2n+1}}{w^{2n+1}}\left( 1-\frac{1}{%
			2^{1+2n}}\right) \frac{(2\pi )^{2n}}{2(2n)!}B_{2n+2}\left( \frac{x}{w}%
		\right) =\frac{\pi }{4}. \label{es-1}
	\end{equation}
\end{theorem}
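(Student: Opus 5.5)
The plan is to follow the chain the paper sets up just before the statement. I start from (\ref{HZ}) and replace each alternating Hurwitz zeta value there using (\ref{El-4}):
\begin{equation*}
\zeta _{E}\left( 2n+2,\frac{x}{w}\right) =\left( 1-2^{-1-2n}\right) \zeta \left( 2n+2,\frac{x}{w}\right) .
\end{equation*}
This gives the intermediate identity already displayed in the text. I then substitute, term by term, the quoted Bernoulli-polynomial evaluation
\begin{equation*}
\zeta \left( 2n+2,\frac{x}{w}\right) =\frac{(-1)^{n+1}(2\pi )^{2n}}{2(2n)!}B_{2n+2}\left( \frac{x}{w}\right) .
\end{equation*}

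The sign bookkeeping comes next. The factor $(-1)^{n}$ in (\ref{HZ}) multiplies $(-1)^{n+1}$ to give $(-1)^{2n+1}=-1$. Together with the leading minus sign in (\ref{HZ}), this produces
\begin{equation*}
\frac{\pi }{4}+\sum_{n=0}^{\infty }\frac{1}{w^{2n+1}}\left( 1-2^{-1-2n}\right) \frac{(2\pi )^{2n}}{2(2n)!}B_{2n+2}\left( \frac{x}{w}\right) .
\end{equation*}
Writing the coefficient as $-(-1)^{2n+1}$ and moving the whole series to the left-hand side gives exactly (\ref{es-1}). Two routine checks remain: $2^{-1-2n}=1/2^{1+2n}$, and the factor $w^{-(2n+1)}$ is carried along unchanged.

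The main obstacle is justifying the Bernoulli-polynomial substitution. As stated, it is a Hurwitz formula valid only for the rational arguments at which Bernoulli polynomials interpolate zeta values, or in the Fourier-series form of $B_{2n+2}(\{x/w\})$. Similarly, (\ref{El-4}) is a relation between alternating and ordinary sums that does not hold for general $a=x/w$. In the plan I take both quoted identities as given, exactly as the paper states them, and treat the theorem as a formal consequence of them.

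If a genuinely rigorous version were wanted, I would first try restricting to $0<x/w\leq 1$. There I would use the Fourier expansion of $B_{2n+2}$, which converts the periodic zeta sum into Bernoulli polynomials. I would also keep track of whether the series in $n$ still converges after the substitution, since $(2\pi)^{2n}/(2n)!$ times $B_{2n+2}$ grows. The interchange of the sum over $n$ with the zeta sums would be handled as in the proof of (\ref{HZ}), by the absolute convergence inherited from the Laplace integral representation (\ref{Rt-1}).
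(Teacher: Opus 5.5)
Your substitution chain and sign bookkeeping reproduce the paper's argument exactly; the paper also just inserts (\ref{El-4}) and the quoted Bernoulli formula into (\ref{HZ}). Neither version is a proof, however, because all three inputs are false, and so is the conclusion.

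First, (\ref{HZ}) is misstated. The computation in its proof produces $\sum_{m\geq 0}(m+x/w)^{-2n-2}$, with no factor $(-1)^{m}$, which is the ordinary Hurwitz function. Write $S(x,w)$ for the N\"{o}rlund sum in the statement. What is actually true, for $x>1$ and $w>0$ (where the term-by-term integration is legitimate), is
\begin{equation*}
S(x,w)=\frac{\pi}{4}-w\sum_{m\geq 0}\frac{1}{(x+mw)^{2}+1}=\frac{\pi}{4}-\sum_{n\geq 0}\frac{(-1)^{n}}{w^{2n+1}}\zeta\left(2n+2,\frac{x}{w}\right).
\end{equation*}
So applying (\ref{El-4}) only inserts a spurious factor $1-2^{-1-2n}$.

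Second, (\ref{El-4}) is the eta--zeta relation, i.e.\ the case $x/w=1$. At $x/w=\frac{1}{2}$, $n=0$ it would say $4G=\pi^{2}/4$, with $G$ Catalan's constant.

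Third, the Bernoulli formula is not ``valid at special rational arguments''; it is valid at none. At $x/w=1$, $n=0$ it reads $\pi^{2}/6=-1/12$; Euler's formula is $\zeta(2n+2)=\frac{(-1)^{n}(2\pi)^{2n+2}}{2(2n+2)!}B_{2n+2}$. Moreover, no identity $\zeta(2n+2,a)=c_{n}B_{2n+2}(a)$ can hold on any $a$-interval. Both sides are real-analytic for $a>0$, and the left side blows up as $a\to 0^{+}$ while the right side is a polynomial. Your Fourier-series repair on $0<x/w\leq 1$ does not help either. That expansion expresses $B_{2n+2}(a)$ through $\sum_{k\geq 1}\cos(2\pi ka)k^{-2n-2}$, which is strictly smaller than $\zeta(2n+2,a)$ for $0<a<1$ and agrees with it only at $a=1$.

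Consequently (\ref{es-1}) is false, not merely unproved. Since $B_{2n+2}(a)\sim(-1)^{n}\frac{2(2n+2)!}{(2\pi)^{2n+2}}\cos(2\pi a)$, the $n$-th term of the series has size about $\frac{(2n+1)(2n+2)}{4\pi^{2}w^{2n+1}}\left\vert\cos(2\pi x/w)\right\vert$. So the series diverges for $w\leq 1$ (unless $\cos(2\pi x/w)=0$), which justifies your growth worry. For $w>1$ it converges, but to the wrong value.

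For instance, at $x=w=2$ one has $S(2,2)=\frac{\pi}{4}-\left(\frac{\pi}{2}\coth\frac{\pi}{2}-1\right)\approx 0.0727$. With $F(t)=\frac{t}{2}\coth\frac{t}{2}=\sum_{k\geq 0}B_{2k}t^{2k}/(2k)!$, the series in (\ref{es-1}) equals $-\frac{1}{4}F''(\pi)+\frac{1}{8}F''(\frac{\pi}{2})\approx -4.5\times 10^{-4}$. The left side of (\ref{es-1}) is therefore about $0.072$, not $\pi/4$.

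What the method genuinely yields is the corrected identity displayed above. At $x=w>1$, Euler's formula turns it into
\begin{equation*}
S(w,w)=\frac{\pi}{4}-\sum_{k\geq 1}\frac{(2\pi)^{2k}B_{2k}}{2(2k)!\,w^{2k-1}}=\frac{\pi}{4}-\frac{w}{2}\left(\frac{\pi}{w}\coth\frac{\pi}{w}-1\right).
\end{equation*}
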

We now give some special values of the equation (\ref{es-1}) with the aid of the following relations and formulas:
\begin{equation*}
	\frac{\pi}{4} = \sum_{n=0}^{\infty} \frac{(-1)^n}{2n+1} = \arctan(1),
\end{equation*}
\cite{Abramowitz}. It is easy to see that the above relations is related to the function $L\left( 1,\chi \right)$ with the help of the equation (\ref{El-5}) for $k=4,$ which implies
\begin{equation*}
	\chi(n) =
	\begin{cases}
		0, & n\ \text{even}, \\[4pt]
		(-1)^{\frac{n-1}{2}}, & n\ \text{odd}.
	\end{cases}
\end{equation*}
By using the above values of $\chi(n)$, we get $$L\left( 1,\chi \right)=\beta(1).$$ That is, for $\chi_{4}(n):=\chi(n)$ yields $$ \beta(1) = \sum_{n=1}^{\infty} \frac{\chi_{4}(n)}{n}.$$ 

The above relations can also be given by the following Table \ref{t1}:

\begin{table}[H]
	\centering
	\caption{The first terms for the function 	$\beta(1)$ attached to the Dirichlet character $\chi_{4}$.}
	\medskip
	\renewcommand{\arraystretch}{1.2}
	\begin{tabular}{|c|c|c|c|}
		\hline
		$n$ & $\chi_4(n)$ & $\beta(1) =\sum_{n=1}^{\infty}  \frac{\chi_{4}(n)}{n}$ \\ \hline
		1 & $1$ & $1$ \\ 
		2 & $0$ & $0$ \\ 
		3 & $-1$ & $-\dfrac{1}{3}$ \\ 
		4 & $0$ & $0$ \\ 
		5 & $1$ & $\dfrac{1}{5}$ \\ 
		6 & $0$ & $0$ \\ 
		7 & $-1$ & $-\dfrac{1}{7}$ \\ 
		8 & $0$ & $0$ \\ 
		9 & $1$ & $\dfrac{1}{9}$ \\ 
		\hline
	\end{tabular}
	\label{t1}
\end{table}
and so on.

Setting some values of the Euler numbers of the second kind in equation (\ref{ess1}), we obtain  other known values of the function $\beta(n)$, including $\beta(1)=\frac{\pi}{4}$, which are given by Table \ref{t2}:

\begin{table}[h!]
	\caption{The values of the Dirichlet beta function at $\beta(2n + 1)$
	}
	\centering
	\begin{tabular}{|c|c|c|c|c|}
		\hline
		$n$ & $s=2n+1$ & $E^{*}_n$ & $\beta(n)$   \\ \hline
		0 & 1 & $E^{*}_0=1$  & $\displaystyle \beta(1)=\frac{\pi}{4}$ 
		
		\\ [3mm]
		1 & 3 & $E^{*}_2=-1$  & $\displaystyle \beta(3)=\frac{\pi^3}{32}$ 
		\\ [3mm]
		2 & 5 & $E^{*}_4=5$  & $\displaystyle \beta(5)=\frac{5\pi^5}{1536}$ 
		\\ [3mm]
		3 & 7 & $E^{*}_6=-61$  & $\displaystyle \beta(7)=\frac{61\pi^7}{184320}$ 
		\\ [3mm]
		4 & 9 & $E^{*}_8=1385$  & $\displaystyle \beta(9)=\frac{1385\pi^9}{41287680}$ \\  
		\hline
	\end{tabular}
	\label{t2}
\end{table}

Using (\ref{El-5}), it is easy to give a relation between the Dirichlet beta
function and the function $L\left( 1,\chi \right) $:%
\begin{equation}
	L\left( 1,\chi \right) =\frac{\pi }{4}=\beta (1), \label{Ed0}
\end{equation}%
which gives us%
\begin{equation}
	\zeta _{E}\left( 1,\frac{1}{2}\right) =2L\left( 1,\chi \right) \label{Ed1}.
\end{equation}
Consequently, by combining (\ref{Ed0}) with (\ref{es-1}), we also get following results:%
\begin{corollary}
	The following relationship holds true:
	\begin{equation*}
		\overset{x}{\underset{1}{\boldsymbol{S}}}\frac{1}{z^{2}+1}\underset{w}{%
			\Delta }z=L\left( 1,\chi \right) -\sum_{n=0}^{\infty }\frac{(-1)^{2n+1}}{%
			w^{2n+1}}\left( 1-\frac{1}{2^{1+2n}}\right) \frac{(2\pi )^{2n}}{2(2n)!}%
		B_{2n+2}\left( \frac{x}{w}\right).
	\end{equation*}
\end{corollary}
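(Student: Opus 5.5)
The corollary is a direct substitution into the theorem containing (\ref{es-1}); no new analysis is needed. I would isolate the Nörlund sum in (\ref{es-1}) by moving the series over the Bernoulli polynomials $B_{2n+2}(x/w)$ to the right-hand side. This gives
\begin{equation*}
\overset{x}{\underset{1}{\boldsymbol{S}}}\frac{1}{z^{2}+1}\underset{w}{\Delta }z=\frac{\pi }{4}-\sum_{n=0}^{\infty }\frac{(-1)^{2n+1}}{w^{2n+1}}\left( 1-\frac{1}{2^{1+2n}}\right) \frac{(2\pi )^{2n}}{2(2n)!}B_{2n+2}\left( \frac{x}{w}\right) .
\end{equation*}
The rearrangement is legitimate because (\ref{es-1}) asserts that the series converges, being the sum of a finite quantity and $\pi/4$.

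Next I would replace the constant $\pi/4$ using (\ref{Ed0}), namely $L(1,\chi)=\pi/4=\beta(1)$ for the character $\chi_4$ of conductor $k=4$. This identity follows from the cotangent evaluation (\ref{El-5}): $\cot(\pi/2)=0$ and $\cot(3\pi/4)=-\cot(\pi/4)=-1$, so the bracket becomes $\chi(1)-\chi(3)=1-(-1)=2$ and $L(1,\chi)=2\pi/8=\pi/4$. Equivalently, it follows from Leibniz's series $\sum_{n\ge0}(-1)^n/(2n+1)=\arctan 1$. Substituting this into the display above gives exactly the stated corollary.

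The only point requiring care is that $\chi$ in the corollary must be read as the nonprincipal character modulo $4$, as fixed in the discussion preceding Table \ref{t1}, so that (\ref{Ed0}) applies. Beyond that the argument is a one-line substitution, and I expect no genuine obstacle: the analytic content, including the convergence of the Bernoulli series, is inherited entirely from the earlier theorem.
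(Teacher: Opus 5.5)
Your deduction is exactly the paper's argument: solve (\ref{es-1}) for the N\"{o}rlund sum and replace $\frac{\pi}{4}$ by $L(1,\chi)$ via (\ref{Ed0}), with $\chi=\chi_{4}$ the nonprincipal character modulo $4$. As you say, all the analytic content comes from (\ref{es-1}), so the corollary is only as good as that identity, and it in fact fails: for $x=w$ with $w$ large, the stated right-hand side is $\frac{\pi}{4}+\frac{1}{24w}+O(w^{-3})$, while the N\"{o}rlund sum equals $\frac{\pi}{4}-w\sum_{m\geq 0}\frac{1}{(x+mw)^{2}+1}<\frac{\pi}{4}$, although your substitution step itself is fine.
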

Consequently, by combining (\ref{Ed1}) with (\ref{es-1}), we also get following results:%
\begin{corollary}
	The following relationship holds true:
	\begin{equation*}
		\overset{x}{\underset{1}{\boldsymbol{S}}}\frac{1}{z^{2}+1}\underset{w}{%
			\Delta }z=\frac{1}{2}\zeta _{E}\left( 1,\frac{1}{2}\right)
		-\sum_{n=0}^{\infty }\frac{(-1)^{2n+1}}{w^{2n+1}}\left( 1-\frac{1}{2^{1+2n}}%
		\right) \frac{(2\pi )^{2n}}{2(2n)!}B_{2n+2}\left( \frac{x}{w}\right) .
	\end{equation*}
\end{corollary}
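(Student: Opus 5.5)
The last corollary follows by substituting (\ref{Ed1}) into (\ref{es-1}). I will take the theorem giving (\ref{es-1}) as established and only rewrite its constant term. Solving (\ref{es-1}) for the N\"{o}rlund sum gives
\begin{equation*}
\overset{x}{\underset{1}{\boldsymbol{S}}}\frac{1}{z^{2}+1}\underset{w}{\Delta }z=\frac{\pi }{4}-\sum_{n=0}^{\infty }\frac{(-1)^{2n+1}}{w^{2n+1}}\left( 1-\frac{1}{2^{1+2n}}\right) \frac{(2\pi )^{2n}}{2(2n)!}B_{2n+2}\left( \frac{x}{w}\right) .
\end{equation*}
It then remains to show that $\frac{\pi}{4}=\frac{1}{2}\zeta_{E}\left(1,\frac{1}{2}\right)$.

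To verify this, I will expand the alternating Hurwitz zeta value directly from (\ref{Alterne}):
\begin{equation*}
\zeta_{E}\left(1,\tfrac{1}{2}\right)=\sum_{n=0}^{\infty}\frac{(-1)^{n}}{n+\frac{1}{2}}=2\sum_{n=0}^{\infty}\frac{(-1)^{n}}{2n+1}=2\beta(1).
\end{equation*}
By (\ref{Ed0}), $\beta(1)=L(1,\chi)=\frac{\pi}{4}$, using (\ref{El-5}) for the character $\chi_{4}$ tabulated in Table \ref{t1}. This is exactly (\ref{Ed1}), and it gives $\frac{1}{2}\zeta_{E}\left(1,\frac12\right)=\frac{\pi}{4}$. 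Substituting this into the rearranged (\ref{es-1}) yields the stated identity.

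The only point needing care is that (\ref{Alterne}) was stated for $Re(s)>1$, whereas here $s=1$. I plan to handle this by noting that at $s=1$ the defining series is alternating with terms decreasing to zero. It therefore converges conditionally by the Leibniz test, and its value is the Leibniz series $\arctan(1)=\frac{\pi}{4}$ recalled just before Table \ref{t1}. So $\zeta_{E}(1,\frac12)$ is to be read as this convergent series, or equivalently as the Abel limit of the Lerch function $\Phi(z,1,\frac12)$ as $z\to -1^{+}$. With that interpretation fixed, the rest is a direct substitution, so I expect no further obstacle.
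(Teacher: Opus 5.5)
Your argument is exactly the paper's. You solve (\ref{es-1}) for the N\"{o}rlund sum and replace $\frac{\pi}{4}$ by $\frac{1}{2}\zeta_{E}\left(1,\frac{1}{2}\right)$ using (\ref{Ed1}). Your check that $\zeta_{E}\left(1,\frac{1}{2}\right)=2\sum_{n\geq 0}\frac{(-1)^{n}}{2n+1}=\frac{\pi}{2}$ is correct, and your Leibniz/Abel remark covering $s=1$ is more careful than the paper.

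Be aware, though, that this only reduces the corollary to (\ref{es-1}), which you and the paper both take on trust and which does not actually hold. It rests on the evaluation $\zeta(2n+2,a)=\frac{(-1)^{n+1}(2\pi)^{2n}}{2(2n)!}B_{2n+2}(a)$, which is already wrong at $n=0$, $a=1$: it gives $-\frac{1}{12}$ instead of $\frac{\pi^{2}}{6}$. Moreover, at $x=w=1$ the left-hand side is finite, but the series on the right-hand side diverges, since its terms have size $\frac{(2n+1)(2n+2)}{4\pi^{2}}\zeta(2n+2)\left(1-2^{-1-2n}\right)$.
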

\subsection{Formulas involving special functions by applications Representation Theorem}

Here, we give some applications of the Representation Theorem and the homogeneous form of the N\"{o}rlund sum with (inverse) Laplace transforms.

By applying the Representation Theorem, given in (\ref{Rt}),\ to\ the
functions $f(z)$ given Table \ref{t3}, with the aid of the Laplace transform of the function $f(z)$, we obtain many formulas for $\overset{x}{\underset{a}{\boldsymbol{S}}}\mathcal{L}\left\{ f(z)\right\} 
\underset{w}{\Delta }z$.

\begin{table}[h!]
	
	\caption{Some values of the $\overset{x}{\underset{a}{\boldsymbol{S}}}\mathcal{L}\left\{ f(z)\right\} 
		\underset{w}{\Delta }z$}
	\centering
	{\small 
		\renewcommand{\arraystretch}{1.4} 
		\begin{tabular}{|c|l|}
			\hline
			\textbf{Function $f(z)$} & \textbf{$\overset{x}{\underset{a}{\boldsymbol{S}}}\mathcal{L}\left\{ f(z)\right\} 
				\underset{w}{\Delta }z$} \\  [5mm]
			\hline
			$\displaystyle f(z) = \frac{w}{z^{2}+w^{2}}$ & $\displaystyle 	\overset{x}{\underset{a}{\boldsymbol{S}}}\!\left(\frac{%
				w}{z^{2}+w^{2}}\right)\Delta z = \int_{0}^{\infty}\!\left(\frac{e^{-at}}{t}-%
			\frac{we^{-tx}}{1-e^{-wt}}\right)\sin(wt)\,dt$ \\  [5mm]
			\hline
			$\displaystyle f(z) = \frac{z}{z^{2}+w^{2}}$ & $\displaystyle 	\overset{x}{\underset{a}{\boldsymbol{S}}}\!\left(\frac{%
				z}{z^{2}+w^{2}}\right)\Delta z = \int_{0}^{\infty}\!\left(\frac{e^{-at}}{t}-%
			\frac{we^{-tx}}{1-e^{-wt}}\right)\cos(wt)\,dt$ \\ [5mm]
			\hline
			$\displaystyle f(z) = \frac{z}{z^{2}-b^{2}}$ & $\displaystyle 	\overset{x}{\underset{a}{\boldsymbol{S}}}\!\left(\frac{%
				z}{z^{2}-b^{2}}\right)\Delta z = \int_{0}^{\infty}\!\left(\frac{e^{-at}}{t}-%
			\frac{we^{-tx}}{1-e^{-wt}}\right)\cosh(bt)\,dt$ \\ [5mm] 
			\hline
			$\displaystyle f(z) = \frac{b}{z^{2}+b^{2}}$ & $\displaystyle 	\overset{x}{\underset{a}{\boldsymbol{S}}}\!\left(\frac{%
				b}{z^{2}+b^{2}}\right)\Delta z = \int_{0}^{\infty}\!\left(\frac{e^{-at}}{t}-%
			\frac{we^{-tx}}{1-e^{-wt}}\right)\sinh(bt)\,dt$ \\ [5mm]
			\hline
			$\displaystyle f(z) = \frac{1}{(z+a)^{v}}$ & $\displaystyle 	\overset{x}{\underset{a}{\boldsymbol{S}}}\!\left(\frac{1%
			}{(z+a)^{v}}\right)\Delta z = \int_{0}^{\infty}\!\left(\frac{e^{-at}}{t}-%
			\frac{we^{-tx}}{1-e^{-wt}}\right)\frac{t^{v-1}}{(v-1)!}e^{-at}\,dt$ \\[5mm]
			\hline
		\end{tabular}
	}
	\label{t3}
\end{table}
If we apply the Representation Theorem to any function with a Laplace transform, it is possible to obtain other formulas for $\overset{x}{\underset{a}{\boldsymbol{S}}}\mathcal{L}\left\{ f(z)\right\} 
\underset{w}{\Delta }z$, which are not included in Table \ref{t3}. For instance, substituting the following function $$f(z)=\frac{2w^{3}}{z^{2}\left( z^{2}+w^{2}\right) }$$ into (\ref%
{Rt}) and using fraction separation technique, we get
\begin{equation*}
	\overset{x}{\underset{a}{\boldsymbol{S}}}\frac{2w^{3}}{z^{2}\left(
		z^{2}+w^{2}\right) }\underset{w}{\Delta }z=\overset{x}{\underset{a}{%
			\boldsymbol{S}}}\frac{w}{z^{2}+w^{2}}\underset{w}{\Delta }z-w\overset{x}{%
		\underset{a}{\boldsymbol{S}}}\frac{z^{2}-w^{2}}{\left( z^{2}+w^{2}\right)
		^{2}}\underset{w}{\Delta }z.
\end{equation*}%
Using  Table \ref{t3} and the Laplace transform, we get%
\begin{eqnarray*}
	&&\overset{x}{\underset{a}{\boldsymbol{S}}}\frac{2w^{3}}{z^{2}\left(
		z^{2}+w^{2}\right) }\underset{w}{\Delta }z \\
	&=&\int\limits_{0}^{\infty }\left( \frac{e^{-at}}{t}-\frac{we^{-tx}}{%
		1-e^{-wt}}\right) \sin \left( wt\right) dt-w\int\limits_{0}^{\infty }\left( 
	\frac{e^{-at}}{t}-\frac{we^{-tx}}{1-e^{-wt}}\right) t\cos \left( wt\right)
	dt.
\end{eqnarray*}
We modify the homogenous N\"{o}rlund sum as follows: 
\begin{equation*}
	\overset{x}{\underset{x}{\boldsymbol{S}}}z^{v}\underset{w}{\Delta }z=\overset%
	{x}{\underset{a}{\boldsymbol{S}}}z^{v}\underset{w}{\Delta }%
	z-\int\limits_{a}^{x}z^{v}dz.
\end{equation*}%
Putting the function 
\begin{equation*}
	\varphi (z)=z^{v},
\end{equation*}
in the above equation and using the formula in (\ref{3.79}), after some
calculations in above the equation, we get the following theorem:

\begin{theorem}
	Let $v\geq 0$. Then we have%
	\begin{equation*}
		\overset{x}{\underset{x}{\boldsymbol{S}}}z^{v}\underset{w}{\Delta }z=\frac{%
			-a^{v+1}+w^{v+1}B_{v+1}(x;w)}{v+1}-\frac{x^{v+1}-a^{v+1}}{v+1}.
	\end{equation*}
\end{theorem}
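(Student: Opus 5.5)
The plan is to compute the non-homogeneous N\"{o}rlund sum $\overset{x}{\underset{a}{\boldsymbol{S}}}z^{v}\underset{w}{\Delta }z$ in closed form, then subtract $\int_{a}^{x}z^{v}dz$ as in the modified homogeneous form stated just before the theorem. The second step is immediate: $\int_{a}^{x}z^{v}dz=\frac{x^{v+1}-a^{v+1}}{v+1}$, which produces the last term of the claimed identity. All the work lies in showing
\begin{equation*}
\overset{x}{\underset{a}{\boldsymbol{S}}}z^{v}\underset{w}{\Delta }z=\frac{w^{v+1}B_{v+1}(x;w)-a^{v+1}}{v+1}.
\end{equation*}
Here I read $B_{n}(x;w)$ as the usual normalisation $B_{n}(x;w)=B_{n}(x/w)$. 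This matches the relation $B_{1}(x;w)=\frac{x}{w}-\frac{1}{2}$ used earlier, so that $w^{n}B_{n}(x;w)=w^{n}B_{n}(x/w)$.

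Since (\ref{3.10}) and (\ref{Rt}) diverge for polynomials, I would use N\"{o}rlund's defining characterization of the principal sum instead of the series. The principal sum $F(x)=\overset{x}{\underset{a}{\boldsymbol{S}}}\varphi (z)\underset{w}{\Delta }z$ is the solution of
\begin{equation*}
\frac{F(x+w)-F(x)}{w}=\varphi (x),
\end{equation*}
normalised so that $F(a)$ equals $w\sum$-constant determined by (\ref{3.12}). Equivalently, $F$ is the polynomial solution fixed by the mean-value condition.

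The first step is to verify the difference equation. From the generating function of the Bernoulli polynomials we have $B_{n}(y+1)-B_{n}(y)=ny^{n-1}$. Putting $y=x/w$ and $n=v+1$ gives
\begin{equation*}
\frac{1}{w}\cdot\frac{w^{v+1}\left(B_{v+1}\left(\frac{x+w}{w}\right)-B_{v+1}\left(\frac{x}{w}\right)\right)}{v+1}=x^{v}.
\end{equation*}
So the candidate satisfies (\ref{3.1e}) with $\varphi (x)=x^{v}$. Any two solutions differ by a $w$-periodic function, which must be constant within the polynomial class.

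The second step, which I expect to be the main obstacle, is fixing that constant. Following Jagerman's convention in (\ref{3.12}) and (\ref{3.79}), the constant is chosen so that the sum "starts at $a$". This produces the term $-\frac{a^{v+1}}{v+1}$, the value of $\int^{a}z^{v}dz$ at the lower endpoint. I would justify it by applying (\ref{3.12}) with the reference point $b$ at which the principal (Bernoulli) sum carries no integral part. As a cross-check, I would let $w\to0$: then $w^{v+1}B_{v+1}(x/w)\to x^{v+1}$, so the non-homogeneous sum tends to $\int_{a}^{x}z^{v}dz$, consistent with $\lim_{w\to0}$ of the N\"{o}rlund sum, and the homogeneous part tends to $0$.

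Alternatively, the constant can be obtained by a regularised use of (\ref{Rt}). Take $f(t)=t^{v}e^{-\epsilon t}/\Gamma(v+1)$ for the function $(z+\epsilon)^{-v-1}$, expand $\frac{wt\,e^{-tx}}{1-e^{-wt}}$ in Bernoulli polynomials, and continue analytically in the exponent from $-v-1$ to $v$ via $\zeta(-v,x/w)=-B_{v+1}(x/w)/(v+1)$. This gives $-w\sum_{m\ge0}(x+mw)^{v}\to w^{v+1}B_{v+1}(x/w)/(v+1)$ in the zeta-regularised sense, with the integral term supplying $-a^{v+1}/(v+1)$. Substituting this into the homogeneous formula and subtracting the integral completes the argument.
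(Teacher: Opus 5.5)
Your reduction is the paper's own. The paper writes $\overset{x}{\underset{x}{\boldsymbol{S}}}z^{v}\underset{w}{\Delta }z=\overset{x}{\underset{a}{\boldsymbol{S}}}z^{v}\underset{w}{\Delta }z-\int_{a}^{x}z^{v}dz$ and inserts the classical closed form $\overset{x}{\underset{a}{\boldsymbol{S}}}z^{v}\underset{w}{\Delta }z=\frac{w^{v+1}B_{v+1}(x/w)-a^{v+1}}{v+1}$ ``after some calculations''. It never derives that closed form.

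Where you go beyond the paper and try to derive it, the difference-equation check is fine. The additive constant, however, is not actually pinned down, and it is the whole content here, since the claimed right-hand side simplifies to $\frac{w^{v+1}B_{v+1}(x/w)-x^{v+1}}{v+1}$. Each of your justifications fails:
\begin{itemize}
\item Taken literally, ``the sum starts at $a$'' means $F(a)=0$, and that gives the wrong constant. For $v=0$ it yields $x-a$, whereas the N\"{o}rlund sum is $wB_{1}(x/w)-a=x-a-\frac{w}{2}$, so the homogeneous sum is $-\frac{w}{2}$, not $0$.
\item Appealing to a reference point $b$ ``at which the sum carries no integral part'' is circular. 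It presupposes $\overset{x}{\underset{0}{\boldsymbol{S}}}z^{v}\underset{w}{\Delta }z=\frac{w^{v+1}B_{v+1}(x/w)}{v+1}$, which is what you need to prove.
\item The limit $w\to 0$ cannot detect a constant that vanishes with $w$; it does not distinguish $x-a$ from $x-a-\frac{w}{2}$.
\item The zeta-regularised route gives the right number, but you never show that the analytically continued value is N\"{o}rlund's principal sum.
\end{itemize}

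Either of two standard facts closes the gap.

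The first is the mean-value property of the principal sum, which you mention in passing but do not use: $\frac{1}{w}\int_{x}^{x+w}F(t)\,dt=\int_{a}^{x}\varphi(t)\,dt$. Take $F(x)=\frac{w^{v+1}B_{v+1}(x/w)}{v+1}+C$ and use $\int_{y}^{y+1}B_{v+1}(u)\,du=y^{v+1}$. The left side becomes $\frac{x^{v+1}}{v+1}+C$, which forces $C=-\frac{a^{v+1}}{v+1}$. You still need that the principal sum of a polynomial is a polynomial, so that the periodic ambiguity reduces to a constant.

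The second is closer to your zeta-regularised route and needs no continuation in $s$: use N\"{o}rlund's actual definition with the convergence factor $e^{-\eta z}$. The bracket $\int_{a}^{\infty}z^{v}e^{-\eta z}dz-w\sum_{m\geq 0}(x+mw)^{v}e^{-\eta(x+mw)}$ equals $(-1)^{v}\frac{d^{v}}{d\eta^{v}}G(\eta)$, where
\[
G(\eta)=\frac{e^{-\eta a}}{\eta}-\frac{we^{-\eta x}}{1-e^{-\eta w}}=\sum_{n\geq 1}\frac{(-1)^{n}\eta^{n-1}}{n!}\left(a^{n}-w^{n}B_{n}(x/w)\right).
\]
Letting $\eta\to 0$ gives $\frac{w^{v+1}B_{v+1}(x/w)-a^{v+1}}{v+1}$ directly.

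Alternatively, simply cite the classical closed form, as the paper implicitly does.
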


\section{Conclusion}

In this paper, we derived new formulas using the N\"{o}rlund sum and the
Representation Theorem. Furthermore, we obtained novel formulas involving
the sum of (inverse) Laplace transforms, the alternating Hurwitz zeta
function, and the reciprocal Catalan sum formula with other operators. We
also derived new results by establishing the relationship between the
reciprocal Catalan sum formula and the derivative operator. Finally, we gave
examples of the relationship between these operators and the Dirichlet $L$-
function. By using the values of the Dirichlet character with conductor 4, we computed the values of the function  $L\left( 1,\chi \right)$ and the Dirichlet beta function.  By using these of the Dirichlet character with conductor 4, we also gave many useful Tables in order to compute special values of the N\"{o}rlund sum. We also introduced certain classes of infinite series, including
the Dirichlet $L$-function with Dirichlet character, the Hurwitz zeta
function, the alternating Hurwitz zeta function, and Bernoulli polynomials. 

Our future plane, by the aid of the Laplace transforms, other integral forms and mathematical tools, is to give find generalization of the N\"{o}rlund sum involving $p$-adic Volkenborn integral and indefinite sum on the set of $p$-adic integers and their cosets. We will also investigate relations among the N\"{o}rlund sum, the twisted $p$-adic $(h,q)$-$L$-functions, given in \cite{padic},  the Teichm\"{u}ller character, the Dirichlet characters, and other characters on multiplicative $p$-torsion subgroup, cyclic groups, etc.


\end{document}